\numberwithin{equation}{section}
\newtheorem{theorem}{Theorem}[section]
\newtheorem{lemma}[theorem]{Lemma}
\newtheorem{remark}[theorem]{Remark}
\newtheorem{proposition}[theorem]{Proposition}
\newtheorem{definition}[theorem]{Definition}
\newtheorem{assumption}[theorem]{Assumption}
\newcommand{\dd}{\,\mathrm{d}}
\newcommand{\R}{\mathbb{R}}
\newcommand{\N}{\mathbb{N}}
\newcommand{\E}{\mathbb{E}}
\renewcommand{\P}{\mathbb{P}}
\title[On the existence of weak solutions to stochastic Volterra equations]{On the existence of weak solutions\\ to stochastic Volterra equations}
\author[Pr{\"o}mel]{David J. Pr{\"o}mel}
\address{David J. Pr{\"o}mel, University of Mannheim, Germany}
\email{proemel@uni-mannheim.de}
\author[Scheffels]{David Scheffels}
\address{David Scheffels, University of Mannheim, Germany}
\email{dscheffe@mail.uni-mannheim.de}
\date{\today}
\begin{document}
 
\begin{abstract}
  The existence of weak solutions is established for stochastic Volterra equations with time-inhomogeneous coefficients allowing for general kernels in the drift and convolutional or bounded kernels in the diffusion term. The presented approach is based on a newly formulated local martingale problem associated to stochastic Volterra equations.
\end{abstract}

\maketitle

\noindent \textbf{Key words:} local martingale problem, singular kernel, stochastic Volterra equation, non-Lipschitz coefficients, weak existence.

\noindent \textbf{MSC 2020 Classification:} 60H20, 45D05.



\section{Introduction}

We investigate the existence of weak solutions to stochastic Volterra equation (SVEs)
\begin{equation}\label{eq:intro}
  X_t = x_0(t)+ \int_0^t K_\mu(s,t)\mu(s,X_s)\dd s + \int_0^t K_\sigma(s,t) \sigma (s,X_s) \dd B_s, \quad t\in [0,T],
\end{equation}
where $x_0$ is a continuous function, $B$ is a Brownian motion, and the kernels $K_\mu, K_\sigma$ are measurable functions. The time-inhomogeneous coefficients $\mu,\sigma$ are only supposed to be continuous in space uniformly in time. In case of ordinary stochastic differential equations (SDEs), i.e. $K_\sigma=K_\mu=1$, the existence of weak solutions was first proven by Skorokhod~\cite{Skorohod1961} and can, nowadays, be found in different generality in standard textbooks like \cite{Stroock1979,Karatzas1991}.

A comprehensive study of weak solutions to stochastic Volterra equations was recently initiated by Abi Jaber, Cuchiero, Larsson and Pulido~\cite{AbiJaber2021}, see also~\cite{Mytnik2015}. The extension of the theory of weak solutions from ordinary stochastic differential equations to SVEs constitutes a natural generalization of the classical theory and is motivated by successful applications of SVEs with non-Lipschitz coefficients as volatility models in mathematical finance, see e.g. \cite{ElEuch2019,AbiJaberElEuch2019b}. Assuming that the kernels in the SVE~\eqref{eq:intro} are of convolution type, i.e. $K_\mu (s,t)= K_\sigma(s,t)=K(t-s)$ for some function $K\colon \R\to \R$, and that the coefficients $\mu,\sigma$ are continuous jointly in space-time, the existence of weak solutions was derived in~\cite{AbiJaber2021}, see also \cite{Mytnik2015,AbiJaber2019,AbiJaber2021b}. To that end, Abi Jaber et al. \cite{AbiJaber2021} introduces a local martingale problem associated to SVEs of convolutional type.

In the present work we establish a local martingale problem associated to general stochastic Volterra equations, see Definition~\ref{def:martproblem}, and show that its solvability is equivalent to the existence of a weak solution to the associated SVE, see Lemma~\ref{lem:equivalence}. Using this newly formulated Volterra local martingale problem, we obtain the existence of weak solutions to stochastic Volterra equations with time-inhomogeneous coefficients, that are not necessarily continuous in $t$, and allowing for general kernels in the drift and convolutional kernels as well as bounded general kernels in the diffusion term, see Theorem~\ref{thm: weak existence}. The presented approach can be considered, roughly speaking, as a generalization of Skorokhod's original construction to the more general case of SVEs, and is developed in a one-dimensional setting to keep the presentation fairly short without cumbersome notation. However, as for ordinary SDEs and for SVEs of convolutional type, all concepts and results are expected to extend to a multi-dimensional setting in a straightforward manner.

\medskip

\noindent \textbf{Organization of the paper:} In Section~\ref{sec:VMP} we introduce a local martingale problem associated to SVEs. The existence of weak solutions to SVEs is provided in Section~\ref{sec:construction}.

\medskip

\noindent\textbf{Acknowledgments:} D. Scheffels gratefully acknowledges financial support by the Research Training Group ``Statistical Modeling of Complex Systems'' (RTG 1953) funded by the German Science Foundation (DFG).

\section{Weak solutions and the Volterra local martingale problem}\label{sec:VMP}

For $T\in (0,\infty)$ we consider the one-dimensional stochastic Volterra equation
\begin{equation}\label{eq:SVE}
  X_t = x_0(t)+ \int_0^t K_\mu(s,t)\mu(s,X_s)\dd s + \int_0^t K_\sigma(s,t) \sigma (s,X_s) \dd B_s, \quad t\in [0,T],  
\end{equation}
where $x_0\colon [0,T]\to\R$ is a continuous function, $(B_t)_{t\in [0,T]}$ is a Brownian motion on a probability space $(\Omega,\mathcal{F},\mathbb{P})$, and the coefficients $\mu,\sigma\colon [0,T]\times\R\to\R $ and the kernels $K_\mu, K_\sigma\colon \Delta_T\to \R$ are measurable functions, using the notation $\Delta_T:=\lbrace (s,t)\in [0,T]\times [0,T]\colon \, 0\leq s\leq t\leq T \rbrace$. The integral $\int_0^t K_{\mu}(s,t)\mu(s,X_s)\dd s$ is defined as a Riemann--Stieltjes integral and $\int_0^t K_{\sigma}(s,t)\sigma(s,X_s)\dd B_s$ as an It{\^o} integral. Moreover, for $p\in [1,\infty)$ we write $L^p(\Omega\times [0,T])$ and $L^p([0,T])$ for the space of $p$-integrable functions on $\Omega\times [0,T]$ and on $[0,T]$, respectively.

\medskip

Analogous to the notion of weak solutions to ordinary stochastic differential equations (see e.g. \cite[Chapter~5.3, Definition~3.1]{Karatzas1991}, we make the following definition.

\begin{definition}\label{def:weak solution}
  A \textup{weak solution} to \eqref{eq:SVE} is a triple $(X,B)$, $(\Omega,\mathcal{F},\mathbb{P})$, $(\mathcal{F}_t)_{t\in [0,T]}$ such that
  \begin{enumerate}
    \item[(i)] $(\Omega,\mathcal{F},\mathbb{P})$ is a probability space, $(\mathcal{F}_t)_{t\in [0,T]}$ is a filtration of sub-$\sigma$-algebras of $\mathcal{F}$ satisfying the usual conditions,
    \item[(ii)] $X=(X_t)_{t\in [0,T]}\in L^1(\Omega~\times~ [0,T])$ is an $(\mathcal{F}_t)$-progressively measurable process, $B=(B_t)_{t\in [0,T]}$ is a Brownian motion w.r.t. $(\mathcal{F}_t)_{t\in [0,T]}$,
    \item[(iii)] $\int_0^t\big(|K_\mu(s,t)\mu(s,X_s)| + |K_\sigma(s,t)\sigma(s,X_s)|^2  \big)\dd s<\infty$ $\P$-a.s. for any $t\in[0,T]$, and
    \item[(iv)] \eqref{eq:SVE} holds for $(X,B)$ on $(\Omega,\mathcal{F},\mathbb{P})$, $\P$-a.s.
  \end{enumerate}
\end{definition}

Under suitable assumptions on the coefficients and kernels, the existence of weak solutions to the stochastic Volterra equation~\eqref{eq:SVE} can be equivalently formulated in terms of solutions to an associated local martingale problem, see Definition~\ref{def:martproblem} below. To that end, we make the following assumption.

\begin{assumption}\label{ass:short}
  Let $K_\mu, K_\sigma\colon \Delta_T\to \R$ be measurable functions with $K_\mu(\cdot,t) \in L^1([0,T])$ and $K_\sigma(\cdot,t)\in L^2([0,T])$ for every $t\in[0,T]$, and let $\mu,\sigma\colon [0,T]\times\R\to\R$ be measurable functions fulfilling the linear growth condition
  \begin{equation*}
    |\mu(t,x)|+|\sigma(t,x)| \leq C_{\mu,\sigma} (1 +|x|), \quad  t\in [0,T],\, x\in \R,
  \end{equation*}
  for some constant $C_{\mu,\sigma}>0$.
\end{assumption}

Let $ C^2(\R)$ be the space of twice continuously differentiable functions $f\colon\R\to \R$ and $C_0^2(\R)$ be the space of all $f\in C^2(\R)$ with compact support. For two stochastic processes $X=(X_t)_{t\in [0,T]}$ and $Z=(Z_t)_{t\in [0,T]}$ on a filtered probability space $(\Omega,\mathcal{F},(\mathcal{F}_t)_{t\in[0,T]},\P)$ satisfying the usual conditions, such that $X\in L^1(\Omega\times [0,T])$ is $(\mathcal{F}_t)$-progressively measurable and $Z$ is $(\mathcal{F}_t)$-adapted and continuous, we introduce the process $(\mathcal{M}_t^f)_{t\in[0,T]}$ by
\begin{equation}\label{def:M_f1}
  \mathcal{M}^f_t:= f(Z_t)-\int_0^t \mathcal{A}^f(s,X_s,Z_s)\dd s,\quad t\in [0,T],
\end{equation}
for $f\in C^2(\R)$, where
\begin{equation}\label{eq:operator A}
  \mathcal{A}^f\colon[0,T]\times\R\times \R\to \R \quad\text{with}\quad \mathcal{A}^f(t,x,z):=\mu(t,x)f^\prime(z)+\frac{1}{2}\sigma(t,x)^2f^{\prime\prime}(z).
\end{equation}

As we shall see in the next proposition, assuming that $(\mathcal{M}^f_t)_{t\in [0,T]}$ is a local martingale for all $f\in C_0^2(\R)$ implies that the stochastic process~$Z$ is a semimartingale.

\begin{proposition}\label{prop:Zsemimart}
  Suppose Assumption~\ref{ass:short}. Let $(X_t)_{t\in [0,T]}$ be an $(\mathcal{F}_t)$-progressively measurable process in $L^1(\Omega\times [0,T])$ and $(Z_t)_{t\in [0,T]}$ be an $(\mathcal{F}_t)$-adapted and continuous process on a filtered probability space $(\Omega,\mathcal{F},(\mathcal{F}_t)_{t\in[0,T]},\P)$ satisfying the usual conditions. If $(\mathcal{M}^f_t)_{t\in[0,T]}$ is a local martingale for every $f\in C_0^2(\R)$, then we have:
  \begin{enumerate}
    \item[(i)] $(Z_t)_{t\in[0,T]}$ is a semimartingale with characteristics $\big(\int_0^{\cdot} \mu(s,X_s)\dd s ,\int_0^{\cdot}\sigma(s,X_s)^2\dd s,0\big)$.
    \item[(ii)] There exists a filtered probability space $(\tilde{\Omega},\tilde{\mathcal{F}},(\tilde{\mathcal{F}}_t)_{t\in[0,T]},\tilde{\P})$ satisfying the usual conditions such that $(Z_t)_{t\in [0,T]}$ is a semimartingale on $(\tilde{\Omega},\tilde{\mathcal{F}},(\tilde{\mathcal{F}}_t)_{t\in[0,T]},\tilde{\P})$ and
    \begin{equation*}
      Z_t=\int_0^t \mu(s,X_s)\dd s + \int_0^t \sigma(s,X_s)\dd B_s,\quad t\in[0,T],
    \end{equation*}
    holds $\tilde{\P}$-a.s., for some Brownian motion $(B_t)_{t\in[0,T]}$ on $(\tilde{\Omega},\tilde{\mathcal{F}},(\tilde{\mathcal{F}}_t)_{t\in[0,T]},\tilde{\P})$.
  \end{enumerate}
\end{proposition}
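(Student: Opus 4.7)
The plan is to extract the semimartingale decomposition of $Z$ by feeding the martingale problem with suitable compactly supported truncations of $z$ and $z^2$, read off the characteristics, and then recover the driving Brownian motion via L\'evy's characterization, enlarging the probability space to accommodate the possible degeneracy of $\sigma$.

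For (i), I would first introduce the localizing stopping times $\tau_n:=\inf\{t\in[0,T]:|Z_t|\geq n\}\wedge T$, which satisfy $\tau_n\uparrow T$ by continuity of $Z$. Pick $\varphi_n,\psi_n\in C_0^2(\R)$ agreeing with $z\mapsto z$ and $z\mapsto z^2$ respectively on $[-n-1,n+1]$. Applying the hypothesis to $\varphi_n$ and stopping at $\tau_n$ shows that
\begin{equation*}
  Z_{t\wedge\tau_n}-Z_0-\int_0^{t\wedge\tau_n}\mu(s,X_s)\dd s
\end{equation*}
is a local martingale; letting $n\to\infty$, $M_t:=Z_t-Z_0-\int_0^t\mu(s,X_s)\dd s$ is therefore a continuous local martingale. (Assumption~\ref{ass:short} together with $X\in L^1(\Omega\times[0,T])$ guarantees absolute convergence of the drift integral.) Applying the hypothesis to $\psi_n$ in the same manner yields that
\begin{equation*}
  Z_{t\wedge\tau_n}^2-Z_0^2-\int_0^{t\wedge\tau_n}\bigl(2\mu(s,X_s)Z_s+\sigma(s,X_s)^2\bigr)\dd s
\end{equation*}
is a local martingale. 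On the other hand, It\^o's formula applied to $Z^2$ via the decomposition $Z=Z_0+M+\int_0^\cdot\mu(s,X_s)\dd s$ gives a local martingale differing from the previous display precisely by $\int_0^\cdot\sigma(s,X_s)^2\dd s-\langle M\rangle$. The latter is a continuous local martingale of finite variation starting at $0$ and hence vanishes, so $\langle M\rangle_t=\int_0^t\sigma(s,X_s)^2\dd s$. Together with the continuity of $Z$ (no jumps), this establishes (i).

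For (ii), I would enlarge the probability space by taking an independent Brownian motion $W$ on some $(\Omega',\mathcal{F}',\P')$ and forming the product $(\tilde\Omega,\tilde{\mathcal{F}},\tilde{\P})$ equipped with the usual-conditions augmentation of the product filtration. Setting
\begin{equation*}
  B_t:=\int_0^t\frac{\mathbf{1}_{\{\sigma(s,X_s)\neq 0\}}}{\sigma(s,X_s)}\dd M_s+\int_0^t \mathbf{1}_{\{\sigma(s,X_s)=0\}}\dd W_s,
\end{equation*}
L\'evy's characterization (via $\langle B\rangle_t=t$, which follows from part~(i)) identifies $B$ as an $(\tilde{\mathcal{F}}_t)$-Brownian motion, and a direct computation gives $\int_0^t\sigma(s,X_s)\dd B_s=M_t$, whence the stated representation (with the convention $Z_0=0$ implicit in the formula of (ii)). The main obstacle is the identification of $\langle M\rangle$ in (i), which hinges on the clean cancellation between the martingale-problem output for $z^2$ and It\^o's formula applied to $Z^2$, using that $\int_0^\cdot Z_s\dd M_s$ is genuinely a local-martingale term; once (i) is in hand, the enlargement and representation in (ii) are entirely standard.
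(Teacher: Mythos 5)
Your proof is correct, but it takes a genuinely different route from the paper's. For part (i) the paper does not extract the decomposition by hand: it shows that $(\mathcal{M}^f_{t})_{t\in[0,T]}$ is a local martingale for every \emph{bounded} $f\in C^2(\R)$ (approximating $f$ by $\phi_n f\in C_0^2(\R)$ and localizing with $\tau_n=\inf\{t:\max(|X_t|,|Z_t|)\geq n\}$), and then invokes the general characterization of semimartingale characteristics \cite[Theorem~II.2.42]{Jacod2003} to read off $\big(\int_0^\cdot\mu(s,X_s)\dd s,\int_0^\cdot\sigma(s,X_s)^2\dd s,0\big)$; for part (ii) it simply cites the representation theorem for semimartingales with absolutely continuous characteristics \cite[Theorem~2.1.2]{Jacod2012}. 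You instead run the classical Stroock--Varadhan argument: test against truncations of $z$ and $z^2$, use It\^o's formula on $Z^2$ to identify $\langle M\rangle$ via the vanishing of a continuous finite-variation local martingale, and then build the Brownian motion explicitly as $B=\int\sigma^{-1}\mathbf{1}_{\{\sigma\neq0\}}\dd M+\int\mathbf{1}_{\{\sigma=0\}}\dd W$ on a product extension, using L\'evy's characterization. Your version is more self-contained and elementary (it avoids the semimartingale-characteristics machinery entirely), at the cost of being specific to the continuous setting; the paper's version is shorter modulo the cited references and is phrased in a framework that would extend to discontinuous semimartingales. Two minor points worth noting, neither of which is a gap: your $\tau_n$ controls only $Z$, which suffices here because on $[0,\tau_n]$ the truncations satisfy $\varphi_n'(Z_s)=1$, $\varphi_n''(Z_s)=0$ and the coefficients $\mu(s,X_s),\sigma(s,X_s)^2$ are a.s.\ integrable in $s$ by the linear growth condition and $X\in L^1(\Omega\times[0,T])$ (the paper's $\tau_n$ additionally caps $|X|$ because its argument needs a uniform bound on the integrand to get a genuine martingale after stopping); and you correctly flag that the representation in (ii) presupposes $Z_0=0$, a point the paper leaves implicit as well.
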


\begin{proof}
  (i) By \cite[Theorem~II.2.42]{Jacod2003}, in order to prove the assertion, it is sufficient to show that $(\mathcal{M}_t^f)_{t\in[0,T]}$, defined in~\eqref{def:M_f1}, is a local martingale for every bounded function $f\in C^2(\R)$.

  Let $f\in C^2(\R)$ be bounded and define the hitting times
  \begin{equation*}
    \tau_n:=\inf\limits_{t\in[0,T]}\lbrace \max(|X_t|,|Z_t|)\geq n\rbrace,\quad n\in\N.
  \end{equation*}
  Note that $\tau_n\to T$ a.s. as $n\to\infty$ since $X\in L^1(\Omega\times [0,T])$ and $Z$ is continuous. Since the underlying filtered probability space satisfies the usual conditions, by the D{\'e}but theorem (see \cite[Chapter~I, (4.15) Theorem]{Revuz1999}), the hitting times $(\tau_n)_{n\in\N}$ are stopping times. It remains to show that $(\tau_n)_{n\in \N}$ is a localizing sequence for $(\mathcal{M}_t^f)_{t\in[0,T]}$. To that end, we approximate $f$ by the functions $(f_n)_{n\in\N}\subset C^2_0(\R)$ given by $f_n := \phi_n f$ for some $\phi_n\in C_0^2(\R)$ taking values in $[0,1]$ and being identical to~$1$ on $[-n,n]$. Hence, $(\mathcal{M}_t^{f_n})_{t\in[0,T]}$ is a local martingale for every $n\in\N$ and, thus, the stopped process $(\mathcal{M}_{t\wedge \tau_n}^{f_n})_{t\in[0,T]}$, given by
  \begin{equation*}
    \mathcal{M}_{t\wedge\tau_n}^{f_n}=(f_n)(Z_{t\wedge\tau_n})-\int_0^{t\wedge\tau_n} \mathcal{A}^{f_n}(s,X_s,Z_s)\dd s,\quad t\in[0,T],
  \end{equation*}
  is a martingale as
  \begin{equation*}
    |\mathcal{M}_{t\wedge\tau_n}^{f_n}|\leq \sup_{x\in \R}|f(x)| + C_{\sigma, \mu, n} n^2<\infty,
  \end{equation*}
  for some constant $C_{\sigma, \mu, n}>0$, using the definition of $\tau_n$ and the linear growth condition on $\mu$ and $\sigma$. Since $ \mathcal{M}_{t\wedge\tau_n}^{f_n}= \mathcal{M}_{t\wedge\tau_n}^{f}$ for $t\in [0,T]$, $(\mathcal{M}_{t\wedge\tau_n}^{f})_{t\in [0,T]}$ is a martingale for every $n\in \N$ and, hence, $(\tau_n)_{n\in \N}$ a localizing sequence for $(\mathcal{M}^f_t)_{t\in [0,T]}$.

  (ii) Since the process $(Z_t)_{t\in[0,T]}$ is a semimartingale with absolutely continuous characteristics $\big(\int_0^{\cdot} \mu(s,X_s)\dd s ,\int_0^{\cdot}\sigma^2(s,X_s)\dd s,0\big)$, the assertion follow by \cite[Theorem~2.1.2]{Jacod2012}.
\end{proof}

Keeping these preliminary considerations and the classical martingale problem (see e.g. \cite[Definition~7.1.1]{Kallianpur2014}) in mind, we formulate a local martingale problem associated to the stochastic Volterra equation~\eqref{eq:SVE}.

\begin{definition}\label{def:martproblem}
  A \textup{solution to the Volterra local martingale problem} given $(x_0,\mu,\sigma,K_\mu,K_\sigma)$ is a triple $(X,Z)$, $(\Omega,\mathcal{F},\mathbb{P})$, $(\mathcal{F}_t)_{t\in [0,T]}$ such that
  \begin{enumerate}
    \item[(i)] $(\Omega,\mathcal{F},\mathbb{P})$ is a probability space, $(\mathcal{F}_t)_{t\in [0,T]}$ is a filtration of sub-$\sigma$-algebras of $\mathcal{F}$ satisfying the usual conditions,
    \item[(ii)] $X=(X_t)_{t\in [0,T]}\in L^1(\Omega\times [0,T])$ is an $(\mathcal{F}_t)$-progressively measurable process,
    \item[(iii)] $(Z_t)_{t\in[0,T]}$ is a continuous semimartingale with $Z_0=0$ and decomposition $Z=A+M$ for some process $(A_t)_{t\in[0,T]}$ of bounded variation and some local martingale $(M_t)_{t\in[0,T]}$,
    \item[(iv)]the process $(\mathcal{M}^f_t)_{t\in[0,T]}$, given by
    \begin{equation}\label{def:M_f}
      \mathcal{M}^f_t:= f(Z_t)-\int_0^t \mathcal{A}^f(s,X_s,Z_s)\dd s,\quad t\in [0,T],
    \end{equation}
    is a local martingale for every $f\in C_0^2(\R)$, where $\mathcal{A}^f$ is defined as in \eqref{eq:operator A}, and
    \item[(v)] the following equality holds:
    \begin{equation}\label{eq:X_MP}
      X_t =x_0(t)+\int_0^t K_\mu(s,t)\dd A_s + \int_0^tK_\sigma(s,t)\dd M_s, \quad t\in[0,T],
      \quad\P\text{-a.s}.
    \end{equation}
  \end{enumerate}
\end{definition}

\begin{remark}
  The first Volterra local martingale problem was formulated in \cite{AbiJaber2021} for stochastic Volterra equations of convolution type, that is, the kernels $K_\mu, K_{\sigma}$ are supposed to be of the form $K(t-s)$ for a deterministic function $K\colon [0,T]\to \R$, see \cite[Definition~3.1]{AbiJaber2021}. However, \cite[Definition~3.1]{AbiJaber2021} fundamentally relies on the convolutional structure to ensure that a weak solution to the SVE leads to a solution of the Volterra local martingale problem. The latter conclusion is based on a substitution and stochastic Fubini argument, which is not applicable for general kernels. Compared to \cite[Definition~3.1]{AbiJaber2021}, the essential difference is that we reformulated \cite[(3.3)]{AbiJaber2021} to the condition~\eqref{eq:X_MP}. While both conditions are equivalent for kernels of convolutional type, the advantage of \eqref{eq:X_MP} is that it allows for general kernels.

  Moreover, notice that the Volterra local martingale problem as presented in Definition~\ref{def:martproblem} reduces to the local martingale problem for ordinary stochastic differential equations in the case $K_\mu= K_\sigma = 1$. Indeed, in this case conditions (i) and (iv) imply conditions (iii) and (v) on a possibly extended probability space, see Proposition~\ref{prop:Zsemimart}.
\end{remark}

\begin{remark}
  Condition (iii) of Definition~\ref{def:martproblem} can be relaxed to the condition ``$(Z_t)_{t\in [0,T]}$ is an $(\mathcal{F}_t)$-adapted and continuous process'' since this together with (iv) of Definition~\ref{def:martproblem} already implies the semimartingale property of $(Z_t)_{t\in [0,T]}$, see Proposition~\ref{prop:Zsemimart}. However, we decided to directly postulate the semimartingale property of $(Z_t)_{t\in [0,T]}$ in the formulation of the Volterra local martingale problem to ensure that condition~(v) is obviously well-defined.
\end{remark}

As for ordinary stochastic differential equations, the existence of weak solutions to SVEs is equivalent to the solvability of the associated Volterra local martingale problem, like in the case of convolutional SVEs as shown in \cite[Lemma~3.3]{AbiJaber2021}.

\begin{lemma}\label{lem:equivalence}
  Suppose Assumption~\ref{ass:short}. There exists a weak solution to the SVE~\eqref{eq:SVE} if and only if there exists a solution to the Volterra local martingale problem given $(x_0,\mu,\sigma,K_\mu,K_\sigma)$.
\end{lemma}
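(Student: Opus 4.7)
The plan is to establish the two implications separately; both hinge on It{\^o}'s formula, the uniqueness of the canonical decomposition of a continuous semimartingale, and Proposition~\ref{prop:Zsemimart}.

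\textbf{Weak solution $\Rightarrow$ Volterra local martingale problem.} Given a weak solution $(X,B)$, $(\Omega,\mathcal{F},\P)$, $(\mathcal{F}_t)$, I would set
$A_t:=\int_0^t \mu(s,X_s)\dd s$, $M_t:=\int_0^t \sigma(s,X_s)\dd B_s$, and $Z:=A+M$. Condition~(iii) of Definition~\ref{def:weak solution} makes $A$ an absolutely continuous, hence BV, process and $M$ a continuous $(\mathcal{F}_t)$-local martingale, so condition~(iii) of Definition~\ref{def:martproblem} is immediate. For~(iv), It{\^o}'s formula applied to $f(Z_t)$ with $f\in C_0^2(\R)$ combines the drift and quadratic-variation terms into $\int_0^t \mathcal{A}^f(s,X_s,Z_s)\dd s$, leaving
$\mathcal{M}^f_t=f(0)+\int_0^t f'(Z_s)\sigma(s,X_s)\dd B_s$, which is a local martingale. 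Finally, using $\dd A_s=\mu(s,X_s)\dd s$ and $\dd M_s=\sigma(s,X_s)\dd B_s$, equation~\eqref{eq:X_MP} rewrites instantly as the SVE~\eqref{eq:SVE}; the absolute integrability required to move between the Lebesgue--Stieltjes/It{\^o} integrals against $A,M$ and the integrals appearing in~\eqref{eq:SVE} is precisely Definition~\ref{def:weak solution}(iii).

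\textbf{Volterra local martingale problem $\Rightarrow$ weak solution.} Given $(X,Z)$ solving the Volterra MP with decomposition $Z=A+M$, Proposition~\ref{prop:Zsemimart}(ii) (whose hypotheses are exactly (i), (ii) and (iv) of Definition~\ref{def:martproblem}) furnishes a filtered extension $(\tilde{\Omega},\tilde{\mathcal{F}},(\tilde{\mathcal{F}}_t),\tilde{\P})$ carrying a Brownian motion $B$ for which
\begin{equation*}
  Z_t=\int_0^t\mu(s,X_s)\dd s+\int_0^t\sigma(s,X_s)\dd B_s, \quad t\in[0,T],
\end{equation*}
holds $\tilde{\P}$-a.s. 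The uniqueness of the continuous-semimartingale decomposition then forces $A_t=\int_0^t\mu(s,X_s)\dd s$ and $M_t=\int_0^t\sigma(s,X_s)\dd B_s$ on $\tilde{\Omega}$, and substituting these identifications into~\eqref{eq:X_MP} reproduces the SVE~\eqref{eq:SVE} for $(X,B)$. The integrability condition~(iii) of Definition~\ref{def:weak solution} is then inherited from the linear growth in Assumption~\ref{ass:short} together with $X\in L^1(\tilde{\Omega}\times[0,T])$ and the a.s.\ finiteness of the integrals in~\eqref{eq:X_MP}.

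\textbf{Main obstacle.} The most delicate step is the backward direction: one must pass to an extended probability space to represent the local martingale part~$M$ as a stochastic integral against a Brownian motion, which is entirely compatible with the notion of weak solution but has to be invoked explicitly via Proposition~\ref{prop:Zsemimart}(ii). The remaining effort lies in routinely justifying, under Assumption~\ref{ass:short} and the integrability postulated by each framework, the identification of $\int_0^t K_\mu(s,t)\dd A_s$ and $\int_0^t K_\sigma(s,t)\dd M_s$ with their explicit Lebesgue and It{\^o} counterparts; a localization in time at the hitting times $\tau_n:=\inf\{t\in[0,T]:|X_t|\vee|Z_t|\geq n\}$ handles any residual a.s.\ integrability concerns.
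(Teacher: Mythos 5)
Your proposal is correct and follows essentially the same route as the paper: the forward direction via It\^o's formula applied to $f(Z)$ after setting $A=\int_0^\cdot\mu(s,X_s)\dd s$, $M=\int_0^\cdot\sigma(s,X_s)\dd B_s$, and the backward direction via Proposition~\ref{prop:Zsemimart}(ii) together with the identification of $A$ and $M$ through the (uniqueness of the) semimartingale decomposition. Your version merely spells out a few points the paper leaves implicit (the decomposition uniqueness and the integrability bookkeeping), which is fine.
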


\begin{proof}
  Let $(X,B)$ be a (weak) solution to~\eqref{eq:SVE} on a probability space $(\Omega,\mathcal{F},\P)$. Setting
  \begin{equation*}
    Z_t:=A_t+M_t:=\int_0^t \mu(s,X_s)\dd s+\int_0^t \sigma(s,X_s)\dd B_s,\quad t\in [0,T],
  \end{equation*}
  It{\^o}'s formula applied to $f(Z_t)$ for $f\in C_0^2(\R)$ yields that
  \begin{align*}
    \mathcal{M}_t^f
    &=f(Z_t)-\int_0^t f^\prime(Z_s)\mu(s,X_s)\dd s-\frac{1}{2}\int_0^t f^{\prime\prime}(Z_s)\sigma(s,X_s)^2\dd s\\
    &=f(Z_0)+\int_0^t f^\prime(Z_s)\sigma(s,X_s)\dd B_s,
  \end{align*}
  which is a local martingale and, by its definition, $Z$ is a semimartingale satisfying~\eqref{eq:X_MP}.

  Conversely, if there exists a solution to the Volterra local martingale problem, we obtain a weak solution to the SVE~\eqref{eq:SVE} by using \eqref{eq:X_MP} and Proposition~\ref{prop:Zsemimart}, which yields that $A_t=\int_0^t \mu(s,X_s)\dd s$ and $M_t=\int_0^t \sigma(s,X_s)\dd B_s$ for some Brownian motion~$(B_t)_{t\in [0,T]}$.
\end{proof}

\section{Existence of weak solutions}\label{sec:construction}

In this section we establish the existence of a weak solution to the SVE~\eqref{eq:SVE} and, equivalently, of a solution to the associated Volterra local martingale problem, under suitable assumptions on the initial condition, coefficients and kernels, which we state in the following.

\begin{assumption}\label{ass:all}
  There is some $p\in (4,\infty)$ and some $\gamma\in (\frac{2}{p},\frac{1}{2})$ such that:
  \begin{enumerate}
    \item[(i)] There is a constant $C_p>0$ such that, for all $(t,t^\prime)\in \Delta_T$,
    \begin{align}\label{eq:regularity kernels}
    \begin{split}
      &\int_0^t |K_{\mu}(s,t')-K_{\mu}(s,t)|^{\frac{p}{p-1}}\dd s + \int_t^{t'} |K_{\mu}(s,t')|^{\frac{p}{p-1}}\dd s \leq C_p|t'-t|^{\frac{\gamma p}{p-1}},\\
      &\int_0^t |K_{\sigma}(s,t')-K_{\sigma}(s,t)|^{\frac{2p}{p-2}}\dd s  + \int_t^{t'} |K_{\sigma}(s,t')|^{\frac{2p}{p-2}}\dd s  \leq C_p|t'-t|^{\frac{2\gamma p}{p-2}}.
    \end{split}
    \end{align}
    \item[(ii)] The coefficients $\mu,\sigma\colon [0,T]\times\R\to\R$ are measurable functions such that for every compact set $\mathcal{K}\subset \R$ and every $\epsilon>0$ there exists a $\delta>0$ such that
    \begin{equation*}
      |\mu(t,x)-\mu(t,y)|+|\sigma(t,x)-\sigma(t,y)|\leq \epsilon, \quad t\in[0,T],\,
      x,y\in \mathcal{K} \text{ with } |x-y|\leq\delta,
    \end{equation*}
    and $\mu,\sigma$ fulfill the linear growth condition
    \begin{equation}\label{eq:linear growth}
      |\mu(t,x)|+|\sigma(t,x)| \leq C_{\mu,\sigma} (1 +|x|),\quad t\in [0,T],\, x\in \R,
    \end{equation}
    for a constant $C_{\mu,\sigma}>0$
    \item[(iii)] The initial condition $x_0\colon [0,T]\to\R$ is $\beta$-H{\"o}lder continuous for every $\beta\in (0,\gamma-1/p)$.
  \end{enumerate}
\end{assumption}

Note that Assumption~\ref{ass:all} directly implies \cite[(3.1)]{Promel2022} with the choice $\epsilon=\frac{2p}{p-2}-2$, and vice versa \cite[(3.1)]{Promel2022} implies Assumption~\ref{ass:all}~(i) with $p=4/\epsilon+2$ (and if necessary rescaling the exponent using H{\"o}lder's inequality if $\epsilon\geq 2$ to secure $p>4$).

\medskip

To formulate our second assumption, for a measurable function $K\colon\Delta_T\to\R$, we say $K(\cdot,t)$ is absolutely continuous for every $t\in [0,T]$ if there exists an integrable function $\partial_1 K\colon \Delta_T \to\R$ such that $K(s,t)-K(0,t)=\int_0^s \partial_1 K(u,t)\dd u$ for $(s,t)\in\Delta_T$.

\begin{assumption}\label{ass:kernel_sm_or_conv}
  The kernel~$K_\mu$ is measurable and bounded in $L^1([0,T])$ uniformly in the second variable, i.e.
  \begin{equation*}
    \sup_{t\in[0,T]}\int_0^t|K_\mu(s,t)|\dd s \leq C
  \end{equation*}
  for some constant $C>0$.
  The kernel $K_\sigma$ is measurable and satisfies at least one of the following conditions:
  \begin{itemize}
    \item[(i)] $K_\sigma$ is a bounded function and $K_\sigma(\cdot,t)$ is absolutely continuous for every $t\in[0,T]$ such that $\partial_1 K_\sigma$ fulfills
    \begin{equation*}
      \sup\limits_{t\in[0,T]}\bigg| \int_0^{t} |\partial_1K_\sigma(s,t)|^p\dd s \bigg|^{\frac{1}{p}}\leq C
    \end{equation*}
    for some $p>1$ and some constant $C>0$.
    \item[(ii)] $K_\sigma(s,t)=\tilde{K}(t-s)$ for all $(s,t)\in\Delta_T$ for a function $\tilde{K}\in L^2([0,T])$.
  \end{itemize}
\end{assumption}

Note, that Assumption~\ref{ass:kernel_sm_or_conv} is satisfied by every convolutional kernel $K_\mu(s,t)=\tilde{K}(t-s)$ for all $(s,t)\in\Delta_T$ for a function $\tilde{K}\in L^1([0,T])$, and in case of Assumption~\ref{ass:kernel_sm_or_conv}~(i), the bound on the second summand in \eqref{eq:regularity kernels} is trivially fulfilled. With these assumptions at hand we are ready to state our main result.

\begin{theorem}\label{thm: weak existence}
  Suppose Assumptions~\ref{ass:all} and~\ref{ass:kernel_sm_or_conv}. Then, there exists a weak solution (in the sense of Definition~\ref{def:weak solution}) such that $(X_t)_{t\in[0,T]}\in C([0,T];\R)$ to the stochastic Volterra equation~\eqref{eq:SVE}.
\end{theorem}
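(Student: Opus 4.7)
The plan is to construct a solution to the Volterra local martingale problem of Definition~\ref{def:martproblem}, which by Lemma~\ref{lem:equivalence} yields a weak solution to~\eqref{eq:SVE}. I would follow a Skorokhod-type compactness scheme: approximate the coefficients so that strong solutions exist, prove tightness of the approximating sequence, extract an almost surely convergent subsequence via the Skorokhod representation theorem, and pass to the limit in the martingale problem.

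First, I would mollify $\mu,\sigma$ in the spatial variable with a smooth bump function to obtain approximations $\mu_n,\sigma_n$ that are Lipschitz in $x$ uniformly in $t$, converge to $\mu,\sigma$ uniformly on compacts by Assumption~\ref{ass:all}(ii), and satisfy the linear growth bound~\eqref{eq:linear growth} with a uniform constant. A Picard iteration in a weighted $L^p$-norm based on $K_\mu(\cdot,t)\in L^{p/(p-1)}([0,T])$ and $K_\sigma(\cdot,t)\in L^{2p/(p-2)}([0,T])$, which are immediate consequences of~\eqref{eq:regularity kernels}, then produces strong solutions $X^n$ to the SVE with coefficients $(\mu_n,\sigma_n)$ on a fixed filtered probability space carrying a Brownian motion~$B$.

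Setting $A^n_t:=\int_0^t \mu_n(s,X^n_s)\dd s$ and $M^n_t:=\int_0^t\sigma_n(s,X^n_s)\dd B_s$, linear growth, Burkholder--Davis--Gundy, and a singular Volterra--Gronwall lemma exploiting the exponents in~\eqref{eq:regularity kernels} would yield $\sup_n\sup_{t\in[0,T]}\E[|X^n_t|^p]<\infty$. Combined with~\eqref{eq:regularity kernels} and H\"older's inequality, this would give $\E[|X^n_{t'}-X^n_t|^p]\leq C|t'-t|^{\gamma p}$ uniformly in $n$; since $\gamma p>2$, Kolmogorov's criterion produces tightness of $(X^n)$ in $C([0,T])$, and the same bounds produce tightness of $(A^n,M^n,B)$. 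Applying Prokhorov and Skorokhod, I may assume $(X^n,A^n,M^n,B^n)\to(X,A,M,B)$ almost surely uniformly on a new probability space. Standard martingale-identification arguments, together with uniform integrability from the $p$-th moment bounds and the uniform continuity of $\mu_n,\sigma_n$ in $x$ on compacts, then show that $B$ remains a Brownian motion and that the process $\mathcal{M}^f$ from~\eqref{def:M_f} is a local martingale for each $f\in C_0^2(\R)$.

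The main obstacle is passing to the limit in condition (v) of Definition~\ref{def:martproblem}, that is, in the identity
\begin{equation*}
  X^n_t = x_0(t)+\int_0^t K_\mu(s,t)\dd A^n_s+\int_0^t K_\sigma(s,t)\dd M^n_s.
\end{equation*}
The drift integral equals $\int_0^t K_\mu(s,t)\mu_n(s,X^n_s)\dd s$ and passes to the limit by dominated convergence, using the $L^1$-bound on $K_\mu(\cdot,t)$ from Assumption~\ref{ass:kernel_sm_or_conv} together with the uniform moment control on $X^n$. For the stochastic integral I split according to Assumption~\ref{ass:kernel_sm_or_conv} on $K_\sigma$. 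Under (i), absolute continuity in the first variable allows integration by parts,
\begin{equation*}
  \int_0^t K_\sigma(s,t)\dd M^n_s = K_\sigma(t,t)M^n_t-\int_0^t \partial_1 K_\sigma(s,t)M^n_s\dd s,
\end{equation*}
so the integral is a pathwise-continuous functional of $M^n$, and uniform convergence $M^n\to M$ combined with boundedness of $K_\sigma$ and the $L^p$-bound on $\partial_1 K_\sigma(\cdot,t)$ yields the desired limit. Under (ii), with $K_\sigma(s,t)=\tilde K(t-s)$, I would first approximate $\tilde K$ in $L^2$ by continuous kernels $\tilde K_k$, handle each $\tilde K_k$ via the integration-by-parts argument, and then send $k\to\infty$ using It\^o's isometry and the uniform $L^2$-bounds on $\sigma_n(\cdot,X^n_\cdot)$. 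Combining both cases with the drift convergence yields condition (v) for the limiting triple and, via Lemma~\ref{lem:equivalence}, a weak solution to the SVE.
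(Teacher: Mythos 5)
Your proposal follows the paper's proof almost step for step: mollify the coefficients (the paper's Lemma~\ref{lem:approximation}), solve the Lipschitz SVEs, prove tightness via Kolmogorov's criterion, apply Prokhorov and Skorokhod, identify the limit as a solution of the Volterra local martingale problem, and conclude via Lemma~\ref{lem:equivalence}; the treatment of condition (v) under Assumption~\ref{ass:kernel_sm_or_conv}(i) by integration by parts is also identical. The one genuinely different step is the convolution case (ii): the paper integrates the approximate SVE in time, applies the stochastic Fubini theorem twice to turn $\int_0^t\int_0^s \tilde K(s-u)\dd \hat M^k_u\dd s$ into $\int_0^t \tilde K(t-s)\hat M^k_s\dd s$, passes to the a.s.\ limit and differentiates, whereas you approximate $\tilde K$ in $L^2$ by smooth kernels, integrate by parts for each approximant, and control the error uniformly in $n$ via It\^o's isometry and the uniform moment bounds on $\sigma_n(\cdot,X^n_\cdot)$. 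Both work; the paper's Fubini route avoids a double limit but only recovers \eqref{eq:X_MP} for a.e.\ $t$ after differentiation, while your $3\varepsilon$ argument gives convergence for each fixed $t$ directly at the cost of an extra approximation layer. Two small points you gloss over: after the Skorokhod representation you must justify that the new processes $(\hat X^k,\hat M^k,\hat B^k)$ still satisfy the approximate SVE and that $\hat M^k=\int_0^\cdot\sigma_{n_k}(s,\hat X^k_s)\dd\hat B^k_s$ — the paper does this via a Yamada--Watanabe-type transfer (pathwise uniqueness for the Lipschitz SVEs), which is more than a ``standard martingale-identification argument''; and your increment bound $\E[|X^n_{t'}-X^n_t|^p]\le C|t'-t|^{\gamma p}$ cannot hold as stated since $x_0$ is only continuous — one must estimate the increments of $X^n-x_0$ (as the paper does) and use that $x_0$ is a fixed deterministic continuous path to conclude tightness of $X^n$.
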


Before proving the aforementioned existence result, let us briefly discuss some properties of weak solutions to the SVE~\eqref{eq:SVE} and some exemplary kernels.

\begin{remark}\label{rem:regularity}
  Suppose Assumption~\ref{ass:all}. Due to \cite[Lemma~3.4 and~Corollary~3.5]{Promel2022}, any weak solution such that $(X_t)_{t\in [0,T]}\in C([0,T];\R)$ to the SVE~\eqref{eq:SVE} satisfies $\sup_{t\in[0,T]}\E[|X_t|^q]<\infty$ for any $q\in [1,\infty)$ and possesses a $\beta$-H{\"o}lder continuous modification for any $\beta\in (0,\gamma-1/p)$.
\end{remark}

\begin{remark}
  Assumptions~\ref{ass:all} and~\ref{ass:kernel_sm_or_conv} are satisfied, e.g., by the following type of diffusion kernels:
  \begin{enumerate}
    \item[(i)] $K_\sigma(s,t):=(t-s)^{-\alpha}$ for $\alpha\in(0,\frac{1}{2})$ for any $p\in (\frac{6}{1-2\alpha},\infty)$ with $\gamma=\frac{1}{2}-\alpha-\frac{1}{p}$,
    \item[(ii)] $K_\sigma(s,t):=\tilde{K}(t-s)$ for a Lipschitz continuous function $\tilde{K}\colon [0,T]\to \R$,
    \item[(iii)] kernels fulfilling \cite[Assumption~2.1]{Promel2022}, and
    \item[(iv)] weakly differentiable kernels such that $\partial_1 K_\sigma(s,t)\leq C(t-s)^{-\alpha}$ for $\alpha\in(0,\frac{1}{2})$.
  \end{enumerate}
\end{remark}

The remainder of the paper is devoted to implement the proof of Theorem~\ref{thm: weak existence} based on several auxiliary lemmas. Generally speaking, the presented proof follows the classical approach of approximation the coefficients by Lipschitz continuous coefficients, in combination with a tightness argument. In contrast, \cite{AbiJaber2019} uses an approximation of the driving noise by pure jump processes with finite activity, which allows to treat convolutional SVEs with jumps.

Note that Lemma~\ref{lem:solution to Volterra martingale problem} implies Theorem~\ref{thm: weak existence} due to Lemma~\ref{lem:equivalence}. Note further that the continuity of $(X_t)_{t\in[0,T]}$ in Theorem~\ref{thm: weak existence} follows by the convergence $\hat{X}^k\to X$ in $C([0,T];\R)$ in Lemma~\ref{lem:limitprocesses}.

\medskip

Assuming the coefficients $\mu,\sigma$ satisfy Assumption~\ref{ass:all}, the next lemma provides a way to approximate $\mu,\sigma$ locally uniformly by Lipschitz continuous coefficients.

\begin{lemma}\label{lem:approximation}
  Let $f\colon [0,T]\times \R \to \R$ be a measurable function such that for every compact set $\mathcal{K}\subset \R$ and every $\epsilon>0$ there exists $\delta>0$ such that
  \begin{equation*}
    |f(t,x)-f(t,y)|\leq \epsilon, \quad t\in[0,T],\,x,y\in \mathcal{K} \text{ with } |x-y|\leq \delta,
  \end{equation*}
  and such that $f$ fulfills the linear growth condition
  \begin{equation}\label{ineq:glgrowth}
    |f(t,x)|\leq C_f(1+|x|), \quad t\in [0,T],\, x\in\R,
  \end{equation}
  for some constant $C_f>0$. Then, there is a sequence $(f_n)_{n\in \N}$ of measurable functions $f_n\colon [0,T]\times \R \to \R$, which satisfies:
  \begin{itemize}
    \item[(i)] linear growth: for $C_f>0$ as in \eqref{ineq:glgrowth}, we have
    \begin{equation*}
      |f_n(t,x)|\leq 2C_f (1+|x|), \quad t\in [0,T],\, x\in \R;
    \end{equation*}
    \item[(ii)] Lipschitz continuity: for each $n\in \N$ there is a $C_n>0$ such that
    \begin{equation*}
      |f_n(t,x)-f_n(t,y)|\leq C_n|x-y|, \quad t\in [0,T],\, x,y\in \R;
    \end{equation*}
    \item[(iii)] locally uniform convergence: for all $r\in (0,\infty)$ we have
    \begin{equation*}
      \sup\limits_{t\in[0,T],x\in [-r,r]}|f(t,x)-f_n(t,x)|\to 0,\quad \text{as }n\to\infty.
    \end{equation*}
  \end{itemize}
\end{lemma}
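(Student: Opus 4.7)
The plan is to build $f_n$ by a two-step procedure: first mollify $f$ in the spatial variable on scale $1/n$, and then localize by a smooth spatial cutoff on scale $n$. Fix $\rho\in C_c^\infty(\R)$ with $\rho\geq 0$, $\mathrm{supp}(\rho)\subset[-1,1]$, and $\int\rho=1$, and set $\rho_n(u):=n\rho(nu)$. Choose $\phi_n\in C^\infty(\R)$ with $0\leq\phi_n\leq 1$, $\phi_n\equiv 1$ on $[-n,n]$, $\phi_n\equiv 0$ outside $[-n-1,n+1]$, and $|\phi_n'|\leq 2$. Then define
\begin{equation*}
  f_n(t,x):=\phi_n(x)\int_\R f(t,y)\rho_n(x-y)\dd y.
\end{equation*}
The mollification is what gives Lipschitz regularity in $x$, while the cutoff $\phi_n$ converts local Lipschitz bounds (which depend linearly on $|x|$) into \emph{global} Lipschitz bounds needed for (ii).

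For (i), the linear growth of $f$ yields
\begin{equation*}
  \left|\int f(t,y)\rho_n(x-y)\dd y\right|\leq C_f\int(1+|y|)\rho_n(x-y)\dd y\leq C_f(1+|x|+1/n)\leq 2C_f(1+|x|),
\end{equation*}
and this bound is inherited by $f_n$ since $|\phi_n|\leq 1$. For (ii), $f_n$ vanishes outside $[-n-1,n+1]$, so it suffices to bound $|\partial_x f_n(t,x)|$ on that interval. Splitting the derivative via the product rule and using $\partial_x\rho_n(x-y)=n^2\rho'(n(x-y))$, a change of variables gives $|\int f(t,y)\rho_n'(x-y)\dd y|\leq C_f\, n\,\|\rho'\|_{L^1}(1+|x|+1/n)$. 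Combined with $|\phi_n'|\leq 2$ and the bound from (i), one obtains $|\partial_x f_n(t,x)|\leq C_n$ uniformly in $t$ for $|x|\leq n+1$, hence the claimed global Lipschitz estimate with constant $C_n$.

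For (iii), fix $r>0$ and $\epsilon>0$, and apply the equicontinuity hypothesis with $\mathcal{K}:=[-r-1,r+1]$ to produce $\delta>0$. For $n\geq\max(r,1/\delta)$ and $x\in[-r,r]$ we have $\phi_n(x)=1$, and after the substitution $u=n(x-y)$,
\begin{equation*}
  |f_n(t,x)-f(t,x)|=\left|\int\bigl(f(t,x-u/n)-f(t,x)\bigr)\rho(u)\dd u\right|\leq \epsilon,
\end{equation*}
since $x-u/n\in\mathcal{K}$ and $|u|/n\leq\delta$ on the support of $\rho$; taking the supremum over $t\in[0,T]$ and $x\in[-r,r]$ gives the locally uniform convergence.

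The only conceptual obstacle is reconciling global Lipschitz regularity (required in (ii)) with the linear, not bounded, growth of $f$: a pure mollification of $f$ has spatial derivative growing in $|x|$ and thus fails to be globally Lipschitz. The cutoff $\phi_n$ resolves this, and the choice $\phi_n\equiv 1$ on $[-n,n]$ (which sweeps out $\R$ as $n\to\infty$) ensures it does not harm the convergence on any fixed compact. Measurability of $f_n$ in $(t,x)$ is routine from Fubini applied to the convolution, and the small technical detail of checking measurability in $t$ for each fixed $x$ is automatic since $f$ is jointly measurable.
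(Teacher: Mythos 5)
Your proof is correct and follows essentially the same route as the paper: spatial mollification of $f$ combined with a smooth cutoff $\phi_n$ equal to $1$ on $[-n,n]$ to turn local into global Lipschitz bounds. The only difference is cosmetic --- you use a shrinking-support mollifier $\rho_n(u)=n\rho(nu)$ where the paper uses the Landau-type kernel $\delta_n(y)=c_n^{-1}(1-y^2)^n\mathbf{1}_{[-1,1]}(y)$ on a fixed support, which makes your convergence step marginally cleaner (no tail term to estimate) but changes nothing of substance.
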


\begin{proof}
  We explicitly choose the sequence $(f_n)_{n\in\N}$ by
  \begin{equation*}
    f_n(t,x):= \phi_n(x) \int_{\R} f(t,x-y)\delta_n(y)\dd y,\quad n\in\N,
  \end{equation*}
  for some $\phi_n\in C_0^2(\R)$ with support in $[-(n+1),n+1]$, taking values in $[0,1]$ and being identical to~$1$ on $[-n,n]$, where $\delta_n(y):=\frac{1}{c_n}(1-y^2)^n \mathbf{1}_{[-1,1]}(y)$ with $c_n:=\int_{[-1,1]} (1-y^2)^n\dd y$.

  (i) For $t\in[0,T]$ and $x\in \R$, using the linear growth condition on $f$, we get
  \begin{align*}
    |f_n(t,x)|
    \leq C_f\int_{[-1,1]}(1+|x-y|)\delta_n(y)\dd y
    \leq  C_f\int_{[-1,1]}(2+|x|)\delta_n(y)\dd y\leq 2 C_f(1+|x|).
  \end{align*}

  (ii) Let $t\in[0,T]$, $x,y\in\R$ and $n\in\N$. Using the compact support of $f_n$ and the fact, that every $\delta_n$ is Lipschitz continuous as a smooth function with compact support, we get
  \begin{equation*}
    \big| f_n(t,x)-f_n(t,y) \big|
    \leq C_f c_n |x-y|\int_{-(n+2)}^{n+2}(1+|z|)\dd z\leq C_n |x-y|
  \end{equation*}
  for some constant~$C_n$.

  (iii) Due to the continuity property of~$f$, we can find for every $r>0$ and for every $\epsilon>0$ some $\delta >0$ such that for all $x,y\in [-r,r]$ with $|x-y|\leq\delta$ and all $t\in[0,T]$ holds $|f(t,x)-f(t,y)|\leq \epsilon$. Assuming $n\in\N$ to be large enough that $\phi_n\equiv 1$ on $[-r,r]$, we get for any $x\in [-r,r]$,
  \begin{align*}
    &|f(t,x)-f_n(t,x)| \\
    &\quad = \int_{[-\delta,\delta]}\delta_n(y)\big|f(t,x)-f(t,x-y)\big|\dd y + \int_{[-1,1]\setminus [-\delta,\delta]}\delta_n(y)\big|f(t,x)-f(t,x-y)\big|\dd y.
  \end{align*}
  Let now $N(\epsilon,r)>0$ be big enough, such that $\int_{[-1,1]\setminus [-\delta,\delta]}\delta_n(y)\dd y<\epsilon$ and $\phi_n\equiv 1$ on $[-r,r]$ for all $n\geq N(\epsilon,r)$. Then, setting $\tilde{r} :=r+1$ for all $n\geq N(\epsilon,r)$
  \begin{align*}
    |f(t,x)-f_n(t,x)|&\leq \int_{[-\delta,\delta]}\delta_n(y)\epsilon\dd y + 2\epsilon\sup_{\substack{s\in[0,T],\\ \tilde{x}\in [-\tilde{r} ,\tilde{r} ]}}|f(s,\tilde{x})|
    \leq \epsilon\bigg( 1+ 2\sup_{\substack{s\in[0,T],\\ \tilde{x}\in [-\tilde{r} ,\tilde{r} ]}}|f(s,\tilde{x})|\bigg),
  \end{align*}
  which tends to zero as $\epsilon\to 0$.
\end{proof}

A suitable approximation, like the one provided in Lemma~\ref{lem:approximation}, ensures the convergence of associated Riemann--Stieltjes integrals. We denote by $C([0,T];\R)$ the space of all continuous functions $g\colon [0,T]\to \R$, which is equipped with the supremum norm~$\|\cdot \|_{\infty}$.

\begin{lemma}\label{lem:convergence_integral}
  Let $f\colon [0,T]\times\R\to\R$ be a function such that for every compact set $\mathcal{K}\subset \R$ and every $\epsilon>0$ there exists $ \delta>0$ such that
  \begin{equation}\label{cont_f}
    |f(t,x)-f(t,y)|\leq \epsilon, \quad t\in[0,T],\,
    x,y\in \mathcal{K} \text{ with } |x-y|\leq\delta,
  \end{equation}
  and $(f_k)_{k\in\N}$ be a sequence of functions such that $f_k\colon[0,T]\times\R\to\R$ and $|f(t,x)|+|f_k(t,x)|\leq C(1+|x|^2)$, $t\in[0,T]$, $x\in\R$, for all $k\in\N$ and for some $C>0$, and $f_k\to f$ locally uniformly. Let $K\colon \Delta_T \to \R$ be measurable and bounded in $L^1([0,T])$ uniformly in the second variable, i.e. $ \sup_{t\in[0,T]}\int_0^t|K(s,t)|\dd s \leq M$ for some $M>0$. If $(X^k)_{k\in\N}$ is a sequence of continuous stochastic processes such that $X^k\to X$ in $C([0,T];\R)$ as $k\to \infty$ $\P$-a.s, then
  \begin{align*}
    &\Big( \int_0^{\cdot}K(s,\cdot) f_k(s,X^k_s)\dd s \Big)_{t\in[0,T]} \stackrel{\P}{\to} \Big( \int_0^{\cdot}K(s,\cdot) f(s,X_s)\dd s \Big)_{t\in[0,T]}
    \quad \text{w.r.t. }  \|\cdot\|_{\infty}, \quad k\to \infty,
  \end{align*}
  where $\stackrel{\P}{\to}$ denotes convergence in probability.
\end{lemma}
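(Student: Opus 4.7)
The plan is to prove the stronger statement that the convergence holds $\P$-almost surely in $\|\cdot\|_\infty$, from which convergence in probability is immediate. The key observation is that on the $\P$-a.s.\ event $\{X^k \to X \text{ in } C([0,T];\R)\}$, the random radius
\[
R := 1 + \|X\|_\infty + \sup_{k\in\N}\|X^k\|_\infty
\]
is $\P$-a.s.\ finite, so all sample paths $s\mapsto X^k_s$ and $s\mapsto X_s$ take values in the compact interval $[-R,R]$. This confines both approximations (that of $f_k\to f$ and that of $X^k\to X$) to a compact spatial domain on which the given continuity and locally uniform convergence can be exploited.

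I would then split the integrand through the triangle inequality
\[
|K(s,t)|\,\bigl|f_k(s,X^k_s) - f(s,X_s)\bigr| \leq |K(s,t)|\,\bigl(|f_k(s,X^k_s)-f(s,X^k_s)| + |f(s,X^k_s)-f(s,X_s)|\bigr).
\]
For the first summand, locally uniform convergence of $f_k$ to $f$ on $[-R,R]$ gives
\[
\sup_{s\in[0,T]}|f_k(s,X^k_s)-f(s,X^k_s)| \leq \sup_{s\in[0,T],\,x\in[-R,R]}|f_k(s,x)-f(s,x)| \longrightarrow 0.
\]
For the second summand, the continuity hypothesis \eqref{cont_f} applied to the compact set $[-R,R]$ produces, for any prescribed $\epsilon>0$, a modulus $\delta>0$ such that $|f(s,x)-f(s,y)|\leq \epsilon$ for all $s\in[0,T]$ whenever $x,y\in[-R,R]$ and $|x-y|\leq\delta$; once $\|X^k-X\|_\infty \leq \delta$ (which happens for all $k$ large), this forces $\sup_{s\in[0,T]}|f(s,X^k_s)-f(s,X_s)|\leq \epsilon$. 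Combining both bounds and invoking the uniform $L^1$-control $\sup_{t\in[0,T]}\int_0^t|K(s,t)|\dd s \leq M$ yields
\[
\sup_{t\in[0,T]}\left|\int_0^t K(s,t)\bigl(f_k(s,X^k_s)-f(s,X_s)\bigr)\dd s\right| \leq 2M\epsilon
\]
for all sufficiently large $k$, establishing the claimed $\P$-a.s.\ uniform convergence.

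This argument is largely routine; the only subtle aspect is that the random radius $R$ depends on $\omega$, so the choice of $\delta$ and the corresponding threshold $k$ are $\omega$-dependent, but since we are only after convergence in probability (in fact almost sure convergence), this causes no difficulty. The two structural ingredients doing the real work are the continuity of $f$ uniformly in $t$ on compacts in $x$ and the uniform-in-$t$ $L^1$ bound on $K$, which together allow one to pass from a pointwise estimate on the integrand to a sup-norm estimate on the integral; the quadratic growth bound on $f$ and $f_k$ is in fact not needed for this lemma once the random spatial domain has been fixed.
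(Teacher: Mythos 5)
Your proof is correct and rests on exactly the same decomposition as the paper's: split $f_k(s,X^k_s)-f(s,X_s)$ into $f_k(s,X^k_s)-f(s,X^k_s)$ plus $f(s,X^k_s)-f(s,X_s)$, control the first piece by the locally uniform convergence $f_k\to f$, the second by the uniform-in-$t$ modulus of continuity of $f$ on a compact set, and then push the estimate through the integral using $\sup_{t}\int_0^t|K(s,t)|\dd s\leq M$. The only real difference is in how the localization is handled: the paper truncates at a deterministic level $n$, estimates $\P(\|X\|_\infty\vee\|X^k\|_\infty\geq n)$, and concludes only convergence in probability, whereas you work pathwise with the random radius $R=1+\|X\|_\infty+\sup_k\|X^k\|_\infty$ (a.s.\ finite since $\|X^k\|_\infty\to\|X\|_\infty$) and thereby obtain the strictly stronger $\P$-a.s.\ uniform convergence. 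This is a legitimate simplification: the $\omega$-dependence of $R$, $\delta$ and the threshold index is harmless for an almost-sure statement, and your observation that the quadratic growth bound is not actually used once the paths are confined to $[-R,R]$ is also consistent with the paper, which likewise never invokes it in the proof.
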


\begin{proof}
  First, note that due to the continuity condition \eqref{cont_f}, for every $n\in\N$ there exists some continuous non-decreasing function $g_n\colon [0,\infty)\to[0,\infty)$ with $g_n(0)=0$, such that for all $x,y\in[-n,n]$,
  \begin{equation*}
    |f(t,x)-f(t,y)|\leq g_n(|x-y|),\quad t\in[0,T].
  \end{equation*}
  Let $\epsilon>0$ and $\delta>0$ be fixed but arbitrary. Choose $N\in\N$ and $K\in\N$ big enough such that
  \begin{equation*}
    \P\big( \|X\|_{\infty}\geq n/2 \big)\leq\delta/4\qquad \text{and}\qquad \P\big( \|X^k-X\|_{\infty}\geq n/2 \big)\leq\delta/4,
  \end{equation*}
  for all $n\geq N$ and $k\geq K$. Then,
  \begin{align*}
    \P\big( \|X\|_{\infty}\vee \|X^k\|_{\infty}\geq n \big)&\leq \P\Big( \lbrace \|X\|_{\infty}\geq n\rbrace \cup\lbrace \|X^k-X\|_{\infty}+\|X\|_{\infty}\geq n\rbrace \Big)\\
    &\leq \P\big( \|X\|_{\infty}\geq n/2\big) +\P\big( \|X^k-X\|_{\infty}\geq n/2 \big)\\
    &\leq\delta/4+\delta/4=\delta/2.
  \end{align*}
  For every $n,k\in\N$, on $\lbrace \|X\|_{\infty}\vee \|X^k\|_{\infty} \leq n\rbrace$ we can bound for $t\in[0,T]$,
  \begin{align}\label{eq:integrateK}
    A^k_t-A_t&:=\int_0^t K(s,t) f_k(s,X^k_s)\dd s-\int_0^t  K(s,t) f(s,X_s) \dd s\notag\\
    &\leq \int_0^t |K(s,t)|\big|f_k(s,X^k_s)-f(s,X^k_s)\big|\dd s + \int_0^t |K(s,t)| \big| f(s,X^k_s)-f(s,X_s) \big|\dd s\notag\\
    &\leq M\bigg( \sup_{t\in [0,T],\, x\in [-n,n]} |f_k(t,x)-f(t,x)| + g_{n}\big(\|X^k-X\|_{\infty}\big) \bigg),
  \end{align}
  with $\sup_{t\in[0,T]}\int_0^t|K(s,t)|\dd s \leq M$.
  For every $n\in\N$ we choose $K_{\epsilon\delta}^n\in\N$ sufficiently large such that
  \begin{equation*}
    \P\bigg( \sup_{t\in [0,T],\, x\in [-n,n]} |f_k(t,x)-f(t,x)|+g_{ n}(\|X^k-X\|_{\infty})\geq \epsilon/M \bigg)\leq\delta/2,\quad  k\geq K_{\epsilon\delta}^n.
  \end{equation*}
  Setting $K_{\epsilon\delta}:=\max\lbrace K_{\epsilon\delta}^N,K \rbrace$, we get
  \begin{align*}
    &\P\big( \|A^k-A\|_{\infty}\geq \epsilon \big)\\
    &\quad\leq \P\Big(\lbrace \|A^k-A\|_{\infty}\geq \epsilon \rbrace\cap \lbrace \|X\|_{\infty}\vee \|X^k\|_{\infty}<N \rbrace \Big) + \P\big( \|X\|_{\infty}\vee \|X^k\|_{\infty}\geq N \big)\\
    &\quad\leq \P\Big( \sup_{t\in [0,T],\, x\in [-N,N]} |f_k(t,x)-f(t,x)| +g_{N}(\|X^k-X\|_{\infty})\geq \epsilon/M \Big) + \delta/2 \leq \delta,
  \end{align*}
  for all $k\geq K_{\epsilon\delta}$, which shows the desired convergence.
\end{proof}

Given coefficients $\mu, \sigma$ satisfying Assumption~\ref{ass:all}, we fix, relying on Lemma~\ref{lem:approximation}, two sequences $(\mu_n)_{n\in \N}$ and $(\sigma_n)_{n\in \N}$ with
\begin{equation*}
  \mu_n \colon [0,T]\times \R \to \R
  \quad \text{and}\quad
  \sigma_n \colon [0,T]\times \R \to \R,
\end{equation*}
that fulfill properties (i)-(iii) of Lemma~\ref{lem:approximation}. For every $n\in \N$, we define $(X^n_t)_{t\in[0,T]}$ as the unique (strong) solution (see e.g. the text before \cite[Theorem~2.3]{Promel2022} for the definition of unique strong solutions to SVEs) to the stochastic Volterra equation
\begin{equation}
  X^n_t=x_0(t) + \int_0^t K_\mu(s,t)\mu_n(s,X^n_s)\dd s + \int_0^t K_\sigma(s,t)\sigma_n(s,X^n_s)\dd B_s, \quad t\in [0,T],\label{def_Xn}
\end{equation}
given a Brownian motion $(B_t)_{t\in[0,T]}$ on some probability space $(\Omega,\mathcal{F},\P)$. Note that $(X^n_t)_{t\in[0,T]}$ exists by \cite[Theorem~1.1]{Wang2008} due to the Lipschitz continuity of $\mu_n$ and $\sigma_n$. Furthermore, we introduce the sequences $(A^n)_{n\in\N}$ and $(M^n)_{n\in\N}$ by
\begin{equation}\label{def:sequencesMnZn}
  A_t^n:=\int_0^t \mu_n(s,X^n_s)\dd s\quad \text{and}\quad M_t^n:=\int_0^t \sigma_n(s,X^n_s)\dd B_s, \qquad t\in[0,T].
\end{equation}
In the following, we denote $X\stackrel{\mathscr{D}}{\sim}Y$ for equality in law of stochastic processes $X$ and $Y$.

\begin{lemma}\label{lem:limitprocesses}
  Suppose Assumption~\ref{ass:all} and let $(X^n)_{n\in\N}$, $(A^n)_{n\in\N}$ and $(M^n)_{n\in\N}$ be given by \eqref{def_Xn} and \eqref{def:sequencesMnZn}. Then, there exist continuous stochastic processes $(\hat{X}^k)_{k\in\N}$, $(\hat{A}^k)_{k\in\N}$, $(\hat{M}^k)_{k\in\N}$, $X$, $A$, $M$ and a Brownian motion $(\tilde{B}_t)_{t\in[0,T]}$ on a common probability space $(\tilde{\Omega},\tilde{\mathcal{F}},\tilde{\P})$ such that $(\hat{X}^k,\hat{A}^k,\hat{M}^k)\to (X,A,M)$ in $C([0,T];\R^3)$ as $k\to \infty$ $\tilde{\P}$-a.s., $(\hat{X}^k,\hat{A}^k,\hat{M}^k)\stackrel{\mathscr{D}}{\sim}(X^{n_k},A^{n_k},M^{n_k})$ and $M$ is a local martingale with the representation
  \begin{equation*}
    M_t=\int_0^t \sigma(s,X_s)\dd \tilde{B}_s,\quad t\in[0,T],
  \end{equation*}
  where $(X^{n_k},A^{n_k},M^{n_k})_{k \in \N}$ denotes some subsequence of $(X^{n},A^{n},M^{n})_{n \in \N}$.
\end{lemma}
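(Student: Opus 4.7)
The plan is to use a tightness-plus-Skorokhod-representation argument followed by a martingale representation theorem. First I would prove tightness of $(X^n,A^n,M^n)_{n\in\N}$ in the Polish space $C([0,T];\R^3)$, then pass to almost sure convergence on a common probability space, and finally identify the limit $M$ as a stochastic integral driven by a Brownian motion $\tilde B$ to be constructed.

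For the tightness step, Lemma~\ref{lem:approximation}(i) furnishes the uniform linear growth $|\mu_n(t,x)|+|\sigma_n(t,x)|\leq 2C_{\mu,\sigma}(1+|x|)$, so the a~priori estimates recalled in Remark~\ref{rem:regularity} (based on \cite[Lemma~3.1 and~3.2]{Promel2022}) apply uniformly in $n$ and yield $\sup_{n}\sup_{t\in[0,T]}\E[|X^n_t|^q]<\infty$ for every $q\in[1,\infty)$. Splitting $X^n=x_0+Y^n$ into the fixed continuous initial part and the SVE integrals, a combination of the Burkholder--Davis--Gundy and Hölder inequalities together with the kernel regularity in Assumption~\ref{ass:all}(i) yields
\begin{equation*}
  \E[|Y^n_{t'}-Y^n_t|^p]\leq C|t'-t|^{\gamma p},\qquad (t,t')\in\Delta_T,
\end{equation*}
uniformly in $n$. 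Since $\gamma p>2$, Kolmogorov's continuity criterion provides tightness of $(Y^n)$, and continuity of the shift $g\mapsto x_0+g$ transfers this to tightness of $(X^n)$. Analogous estimates based on $|A^n_{t'}-A^n_t|\leq 2C_{\mu,\sigma}\int_t^{t'}(1+|X^n_s|)\dd s$ and BDG applied to $M^n$ give tightness of $(A^n)$ and $(M^n)$, hence of the joint sequence in $C([0,T];\R^3)$. Prokhorov's theorem then yields a convergent subsequence $(X^{n_k},A^{n_k},M^{n_k})_{k\in\N}$, and Skorokhod's representation theorem on the Polish space $C([0,T];\R^3)$ supplies the common probability space $(\tilde\Omega,\tilde{\mathcal F},\tilde\P)$ carrying the almost surely convergent copies $(\hat X^k,\hat A^k,\hat M^k)\stackrel{\mathscr{D}}{\sim}(X^{n_k},A^{n_k},M^{n_k})$ with continuous limits $(X,A,M)$.

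To identify $M$, I would exploit that each $\hat M^k$, having the same finite-dimensional distributions as $M^{n_k}$, is a continuous $L^p$-bounded martingale with respect to its natural filtration, with quadratic variation $\int_0^\cdot \sigma_{n_k}(s,\hat X^k_s)^2\dd s$. Lemma~\ref{lem:convergence_integral} applied with $K\equiv 1$ and $f=\sigma^2$ shows $\langle\hat M^k\rangle\to\int_0^\cdot \sigma(s,X_s)^2\dd s$ uniformly in probability. Passing the first- and second-moment martingale identities $\E[(\hat M^k_t-\hat M^k_s)F]=0$ and $\E[((\hat M^k_t)^2-\langle\hat M^k\rangle_t-(\hat M^k_s)^2+\langle\hat M^k\rangle_s)F]=0$, for bounded continuous test functionals $F$ measurable up to time~$s$, to the limit -- justified by the uniform $L^p$-bound with $p>4$ -- identifies $M$ as a continuous local martingale on $(\tilde\Omega,\tilde{\mathcal F},\tilde\P)$ with $\langle M\rangle_t=\int_0^t \sigma(s,X_s)^2\dd s$. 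A standard martingale representation theorem (cf.\ \cite[Section~3.4]{Karatzas1991}), applied after possibly enlarging the probability space by an independent Brownian motion to handle the zero set of $\sigma(\cdot,X_\cdot)$, then produces the Brownian motion $\tilde B$ satisfying $M_t=\int_0^t \sigma(s,X_s)\dd\tilde B_s$.

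The hard part will be this final identification: one must choose a filtration on $(\tilde\Omega,\tilde{\mathcal F},\tilde\P)$ compatible with all the limit processes and $\tilde B$, and verify the uniform integrability required to push both the first- and second-moment martingale identities through the Skorokhod limit. The convergence of the quadratic variations rests critically on Lemma~\ref{lem:convergence_integral}, which is stated in exactly the form required here.
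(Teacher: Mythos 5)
Your proposal follows the same skeleton as the paper's proof: uniform moment and increment estimates from the uniform linear growth of $(\mu_n,\sigma_n)$ and the kernel regularity, Kolmogorov tightness, Prokhorov, Skorokhod representation, identification of $\langle M\rangle$, and finally the representation theorem for continuous local martingales with absolutely continuous quadratic variation on an extended space. The one place where you genuinely diverge is the identification of $\hat{M}^k$ and of the limit $M$ as martingales. The paper includes the driving Brownian motion $B$ as a fourth component in the tightness and Skorokhod step, invokes the general Yamada--Watanabe result of Kurtz to write $\hat{M}^k_t=\int_0^t\sigma_{n_k}(s,\hat{X}^k_s)\dd\hat{B}^k_s$ on the new space, and then cites the stability results \cite[Proposition~IX.1.17 and Corollary~VI.6.29]{Jacod2003} to pass the local martingale property and the quadratic variation to the limit. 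You instead drop $B$ from the tuple, transfer the martingale property of $M^{n_k}$ and of $(M^{n_k})^2-\int_0^\cdot\sigma_{n_k}(s,X^{n_k}_s)^2\dd s$ to $\hat{M}^k$ via equality in law of the triple (legitimate, since these are martingales with respect to the natural filtration of the triple by the tower property), and push the first- and second-moment identities through the a.s.\ limit using the uniform $L^p$-bounds with $p>4$ for uniform integrability, together with Lemma~\ref{lem:convergence_integral} (with $K\equiv 1$) for the convergence of the brackets. Both routes are standard and correct; yours is more self-contained and avoids the Yamada--Watanabe step here, while the paper's choice of carrying $\hat{B}^k$ along is reused in the proof of Lemma~\ref{lem:solution to Volterra martingale problem}(v), where the representation \eqref{eq:sve_skorokhod} of $\hat{X}^k$ in terms of $\hat{M}^k$ is again obtained from Kurtz's theorem -- so a write-up along your lines would have to supply that identification separately later. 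One cosmetic point: your increment bound $\E[|Y^n_{t'}-Y^n_t|^p]\leq C|t'-t|^{\gamma p}$ is obtainable directly from H\"older and Burkholder--Davis--Gundy with Assumption~\ref{ass:all}(i), whereas the paper quotes the slightly weaker exponent $\beta p$ with $\beta<\gamma-1/p$; either suffices for Kolmogorov's tightness criterion since $\gamma p>2$.
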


\begin{proof}
  First we want to apply Kolmogorov’s tightness criterion (see \cite[Problem~2.4.11]{Karatzas1991}) to the probability measures $(\P_{(X^n,A^n,M^n,B)})_{n\in\N}$ associated to the four-dimensional stochastic processes $(X^n,A^n,M^n,B)_{n\in\N}$. By Lemma~\ref{lem:approximation}~(i) we know, that the coefficients $\mu_n$ and $\sigma_n$ fulfill the linear growth condition \eqref{eq:linear growth} with uniformly bounded constants, i.e. $C_{\mu_n,\sigma_n}\leq 2C_{\mu,\sigma}$ for all $n\in\N$. Hence, using $p\in(4,\infty)$ from Assumption~\ref{ass:all}, we deduce, by \cite[Lemma~3.4]{Promel2022}, that
  \begin{equation*}
    \sup_{n\in \N} \sup_{ s\in [0,T]} \mathbb{E}[|X_s^n|^p]\leq C \bigg(1+\sup\limits_{ s\in[0,T]}|x_0(s)|\bigg)^p<\infty,
  \end{equation*}
  and, by \cite[Lemma~3.1 and Remark~3.3]{Promel2022}, that
  \begin{equation*}
    \mathbb{E}[|X_{t'}^n-x_0(t')-X_t^n-x_0(t)|^p] \leq C|t'-t|^{\beta p}, \quad n\in \N,
  \end{equation*}
  for every $\beta \in (0, \gamma-1/p)$, where the constant $C>0$ depends only on $p$, $T$, $K_{\mu}$, $K_{\sigma}$ and $C_{\mu,\sigma}$. Moreover, it is straightforward to show that
  \begin{equation*}
    \E[|A_{t'}^n-A_t^n|^p]\leq C|t'-t|^{\frac{p}{2}}\quad\text{and}\quad\E[|M_{t'}^n-M_t^n|^p]\leq C|t'-t|^{\frac{p}{2}}
  \end{equation*}
  for all $0\leq t\leq t'\leq T$ and some constant $C>0$, by H{\"o}lder's inequality and Burkholder--Davis--Gundy's inequality, respectively. Choosing $\beta$ sufficiently close to $\gamma-1/p$ so that $\beta p>1$, which is possible due to $\gamma>2/p$ in Assumption~\ref{ass:all}, and noting that the initial distributions $(X^n_0,A^n_0,M^n_0,B_0)_{n\in \N}$ are independent of $n$, we can apply Kolmogorov's tightness criterion to obtain the tightness of the sequence $(\P_{(X^n,A^n,M^n,B)})_{n\in\N}$. Hence, by Prohorov's theorem (\cite[Theorem~2.4.7]{Karatzas1991}) we get relative compactness (\cite[Definition~2.4.6]{Karatzas1991}) of the sequence of measures $(\P_{(X^n,A^n,M^n,B)})_{n\in\N}$ in $\mathcal{M}_1(C([0,T];\R^4))$, which denotes the space of all probability measures on $C([0,T];\R^4)$. Consequently, there exists a converging subsequence $(\P_{(X^{n_k},A^{n_k},M^{n_k},B)})_{k\in\N}$ such that
  \begin{equation*}
    \P_{(X^{n_k},A^{n_k},M^{n_k},B)} \to \P_{(X,A,M,B)}\quad \text{weakly} \quad \text{as}\quad k\to \infty,
  \end{equation*}
  for some measure  $\P_{(X,A,M,B)}$ in $\mathcal{M}_1(C([0,T];\R^4))$.

  The Skorokhod representation theorem (see e.g. \cite[Theorem~11.7.2]{Dudley2002}) yields the existence of some probability space $(\hat{\Omega},\hat{\mathcal{F}},\hat{\P})$ with continuous stochastic processes $(\hat{X}^k)_{k\in\N}$, $(\hat{A}^k)_{k\in\N}$, $(\hat{M}^k)_{k\in\N}$, $(\hat{B}^k)_{k\in\N}$ and $X$, $A$, $M$, $\hat{B}$ on it such that
  \begin{equation*}
    (X^{n_k},A^{n_k},M^{n_k},\hat{B})\stackrel{\mathscr{D}}{\sim}(\hat{X}^k,\hat{A}^k,\hat{M}^k,\hat{B}^k),\qquad k\in\N,
  \end{equation*}
  and
  \begin{equation*}
    (\hat{X}^k,\hat{A}^k,\hat{M}^k,\hat{B}^k)\to (X,A,M,\hat{B}) \quad \text{in}\quad C([0,T];\R^4) \quad  \text{as}\quad k\to\infty, \quad \hat{\P}\text{-a.s.}
  \end{equation*}
  From a general version of the Yamada--Watanabe result, see \cite[Theorem~1.5]{Kurtz2014}, we can deduce that $\hat{M}^k_t=\int_0^t\sigma_{n_k}(s,\hat{X}^k_s)\dd \hat{B}^k_s$, for $t\in[0,T]$ and for all $k\in\N$, and the stochastic processes $(B^k)_{k\in\N}$ are Brownian motions as $\hat{B}^k\stackrel{\mathscr{D}}{\sim}B$. Thus, $\hat{M}^k$ is a local $\hat{\P}$-martingale with quadratic variation $\langle\hat{M}^k\rangle_t=\int_0^t\sigma_{n_k}(s,\hat{X}^k_s)^2\dd s$.

  Due to the $\hat{\P}$-a.s. convergence of $(\hat{M}^k)_{k\in \N}$ to $M$, \cite[Proposition~IX.1.17]{Jacod2003} implies that $M$ is also a local $\hat\P$-martingale, and the convergence of $\int_0^t \sigma_{n_k}(s,X^{n_k}_s)^2\dd s$ in probability, see Lemma~\ref{lem:convergence_integral}, together with \cite[Corollary~VI.6.29]{Jacod2003} implies that the quadratic variation of $M$ is $\langle M\rangle_t=\int_0^t\sigma(s,X_s)^2\dd s$. Therefore, the representation theorem for local martingales with absolutely continuous quadratic variations (see e.g. \cite[Theorem~3.4.2]{Karatzas1991}) yields the existence of some probability space $(\tilde{\Omega},\tilde{\mathcal{F}},\tilde{\P})$, which is an extension of $(\hat{\Omega},\hat{\mathcal{F}},\hat{P})$, and a Brownian motion $(\tilde{B}_t)_{t\in[0,T]}$ on it, such that $M_t=\int_0^t \sigma(s,X_s)\dd \tilde{B}_s$ for $t\in[0,T]$.
\end{proof}

Using the stochastic processes $X$, $A$ and $M$ from Lemma~\ref{lem:limitprocesses}, we can construct a solution to the Volterra local martingale problem in the sense of Definition~\ref{def:martproblem}.

\begin{lemma}\label{lem:solution to Volterra martingale problem}
  Suppose Assumptions~\ref{ass:all} and \ref{ass:kernel_sm_or_conv}. There exists a solution to the Volterra local martingale problem given $(x_0,\mu,\sigma,K_\mu,K_\sigma)$.
\end{lemma}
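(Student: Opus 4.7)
The plan is to realize a solution to the Volterra local martingale problem by passing to the limit in the approximating sequence produced by Lemma~\ref{lem:limitprocesses}. I would work on $(\tilde{\Omega},\tilde{\mathcal{F}},\tilde{\P})$, set $Z_t := A_t + M_t$, and take $(\mathcal{F}_t)_{t\in[0,T]}$ to be the usual augmentation of the natural filtration generated by $(X,A,M,\tilde{B})$. The uniform moment bound $\sup_n\sup_{t}\E[|X^n_t|^p]<\infty$ established in the proof of Lemma~\ref{lem:limitprocesses} is inherited by $X$ via Fatou, giving $X\in L^p(\tilde{\Omega}\times[0,T])\subset L^1(\tilde{\Omega}\times[0,T])$, hence (ii) of Definition~\ref{def:martproblem}. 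Applying Lemma~\ref{lem:convergence_integral} with $K\equiv 1$ and $f_k=\mu_{n_k}$ to the a.s.\ limit $\hat{A}^k_t = \int_0^t\mu_{n_k}(s,\hat{X}^k_s)\dd s \to A_t$ identifies $A_t = \int_0^t\mu(s,X_s)\dd s$, so $A$ is of bounded variation and~(iii) follows with $Z = A + M$, $M$ being a local martingale by Lemma~\ref{lem:limitprocesses}. For~(iv), the representation $Z_t = A_t + \int_0^t\sigma(s,X_s)\dd\tilde{B}_s$ and It{\^o}'s formula applied to $f\in C_0^2(\R)$ yield
\begin{equation*}
  \mathcal{M}^f_t = f(0) + \int_0^t f^\prime(Z_s)\sigma(s,X_s)\dd\tilde{B}_s,
\end{equation*}
which is a local martingale.

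The substantive step is condition~(v). Since $(\hat{X}^k,\hat{A}^k,\hat{M}^k,\hat{B}^k)\stackrel{\mathscr{D}}{\sim}(X^{n_k},A^{n_k},M^{n_k},B)$ and $\hat{M}^k_t = \int_0^t\sigma_{n_k}(s,\hat{X}^k_s)\dd\hat{B}^k_s$, the prelimit SVE~\eqref{def_Xn} transfers to
\begin{equation*}
  \hat{X}^k_t = x_0(t) + \int_0^t K_\mu(s,t)\dd\hat{A}^k_s + \int_0^t K_\sigma(s,t)\dd\hat{M}^k_s, \quad t\in[0,T],\ \tilde{\P}\text{-a.s.}
\end{equation*}
The left-hand side tends a.s.\ to $X_t$ by Lemma~\ref{lem:limitprocesses}, and Lemma~\ref{lem:convergence_integral} applied with kernel $K_\mu$ (whose uniform $L^1$-bound is exactly what Assumption~\ref{ass:kernel_sm_or_conv} supplies) and $f_k = \mu_{n_k}$ yields convergence in probability of the drift integral to $\int_0^t K_\mu(s,t)\mu(s,X_s)\dd s = \int_0^t K_\mu(s,t)\dd A_s$.

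The main obstacle is the stochastic-integral passage $\int_0^t K_\sigma(s,t)\dd\hat{M}^k_s \to \int_0^t K_\sigma(s,t)\dd M_s$ under the weak regularity allowed on $K_\sigma$, and it is precisely this difficulty that forces the dichotomy in Assumption~\ref{ass:kernel_sm_or_conv}. In case~(i), I would apply stochastic integration by parts,
\begin{equation*}
  \int_0^t K_\sigma(s,t)\dd\hat{M}^k_s = K_\sigma(t,t)\hat{M}^k_t - \int_0^t \hat{M}^k_s\,\partial_1 K_\sigma(s,t)\dd s,
\end{equation*}
and conclude via the $\tilde{\P}$-a.s.\ uniform convergence $\hat{M}^k\to M$ (which yields an a.s.\ bound on $\sup_k\|\hat{M}^k\|_\infty$) together with the $L^p$-bound on $\partial_1 K_\sigma$ and dominated convergence. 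In case~(ii), where $K_\sigma(s,t) = \tilde{K}(t-s)$ with $\tilde{K}\in L^2([0,T])$, I would approximate $\tilde{K}$ by smooth functions $\tilde{K}_m\to\tilde{K}$ in $L^2$, handle each $\tilde{K}_m$ by the integration-by-parts argument of case~(i) to pass $k\to\infty$ for fixed $m$, and then let $m\to\infty$ using It{\^o}'s isometry combined with the uniform linear growth of $\sigma_{n_k}$ and the uniform second-moment bound on $\hat{X}^k$ (inherited by $X$ via Fatou). This yields condition~(v), completing the plan.
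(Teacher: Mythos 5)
Your proposal is correct and follows the same overall architecture as the paper: realize the limit on the Skorokhod space from Lemma~\ref{lem:limitprocesses}, transfer the prelimit SVE to the hatted processes via the Yamada--Watanabe/Kurtz argument, pass to the limit in the drift integral with Lemma~\ref{lem:convergence_integral}, and split the diffusion term according to the dichotomy in Assumption~\ref{ass:kernel_sm_or_conv}, with integration by parts in case~(i). Two of your sub-steps differ genuinely from the paper's. For condition~(iv), you first identify $A_t=\int_0^t\mu(s,X_s)\dd s$ (via Lemma~\ref{lem:convergence_integral} with $K\equiv 1$) and $M_t=\int_0^t\sigma(s,X_s)\dd\tilde{B}_s$ and then apply It{\^o}'s formula directly --- essentially re-running the forward direction of Lemma~\ref{lem:equivalence}; the paper instead shows that the approximate processes $\mathcal{M}^{f,k}$ are local martingales and invokes a stability result for local martingales under weak/a.s.\ convergence (\cite[Proposition~IX.1.17]{Jacod2003}). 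Your route is arguably cleaner since the explicit identification of $A$ is needed for~(v) anyway, whereas the paper's route avoids committing to the representation of $A$ at that stage. For the convolutional case~(ii), you mollify $\tilde{K}$ in $L^2$, treat each smooth $\tilde{K}_m$ by the case~(i) integration-by-parts argument, and control the error uniformly in $k$ via It{\^o}'s isometry and the uniform moment bounds; the paper instead integrates the SVE in $t$, applies the stochastic Fubini theorem twice to rewrite $\int_0^t\int_0^sK_\sigma(s-u)\dd\hat{M}^k_u\dd s$ as $\int_0^tK_\sigma(t-s)\hat{M}^k_s\dd s$, passes to the a.s.\ limit, and differentiates. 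Both work; your mollification argument gives convergence of $\int_0^tK_\sigma(s,t)\dd\hat{M}^k_s$ for each fixed $t$ directly (at the cost of a three-epsilon argument and a subsequence to get a.s.\ statements), while the paper's Fubini trick stays entirely pathwise but only yields the identity after an antidifferentiation/differentiation step, i.e.\ for a.e.\ $t$. No gap in either of your two deviations.
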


\begin{proof}
  Recall, the stochastic processes $(X^n)_{n\in\N}$, $(A^n)_{n\in\N}$ and $(M^n)_{n\in\N}$ on $(\Omega,\mathcal{F},\P)$ are given in \eqref{def:sequencesMnZn} and $(\hat{X}^k)_{k\in\N}$, $(\hat{A}^k)_{k\in\N}$, $(\hat{M}^k)_{k\in\N}$, $X$, $A$ and $M$ on $(\tilde{\Omega},\tilde{\mathcal{F}},\tilde{\P})$ are given by Lemma~\ref{lem:limitprocesses}. We introduce the stochastic processes $(Z^n)_{n\in\N}$, $(\hat{Z}^k)_{k\in\N}$ and $Z$ by
  \begin{equation*}
    Z_t^n := A_t^n + M_t^n,\quad \hat{Z}_t^k:=\hat{A}_t^k + \hat{M}_t^k \quad\text{and}\quad Z_t:= A_t + M_t, \quad t\in[0,T].
  \end{equation*}
  We shall show that the triple $(X,Z)$, $(\tilde{\Omega},\tilde{\mathcal{F}},\tilde{\P})$, $(\mathcal{F}^X_t)_{t\in [0,T]}$, where $(\mathcal{F}^X_t)_{t\in [0,T]}$ denotes the augmented natural filtration of $X$ (cf. \cite[Definition~2.7.2]{Karatzas1991}), solves the Volterra local martingale problem given $(x_0,\mu,\sigma,K_\mu,K_\sigma)$. Since the properties (i)-(iii) of Definition~\ref{def:martproblem} are fairly easy to check, we verify here that
  \begin{itemize}
    \item[(iv)] the process $(\mathcal{M}_t^f)_{t\in[0,T]}$ defined by \eqref{def:M_f} is a local $\tilde{\P}$-martingale for every $f\in C_0^2(\R)$,
    \item[(v)] the equality \eqref{eq:X_MP} holds $\tilde{\P}$-a.s.
  \end{itemize}

  \smallskip

  (iv) For $k\in\N$ and $f\in C_0^2(\R)$, the stochastic process $(\mathcal{M}^{f,k}_t)_{t\in[0,T]}$ is defined by
  \begin{equation*}
    \mathcal{M}^{f,k}_t := f(\hat{Z}_t^k)-\int_0^t \mathcal{A}^{f,k}(s,\hat{X}_s^k,\hat{Z}_s^k)\dd s,\quad t\in [0,T],
  \end{equation*}
  where $\mathcal{A}^{f,k}(t,x,z):=\mu_{n_k}(t,x)f^\prime(z)+\frac{1}{2}\sigma_{n_k}(t,x)^2f^{\prime\prime}(z)$. Due to $(\hat{X}^k,\hat{Z}^k)\stackrel{\mathscr{D}}{\sim}(X^{n_k},Z^{n_k})$ and since $(X^{n_k},Z^{n_k})$ solves the Volterra local martingale problem given $(x_0,\mu_{n_k},\sigma_{n_k},K_\mu,K_\sigma)$ on $(\Omega,\mathcal{F},\P)$ by construction and Lemma~\ref{lem:equivalence}, it follows that $(\mathcal{M}^{f,k}_t)_{t\in[0,T]}$ is a local martingale on $(\tilde{\Omega},\tilde{\mathcal{F}},\tilde{\P)}$ for every $k\in\N$. Moreover, Lemma~\ref{lem:convergence_integral} implies that $\mathcal{M}^{f,k}\to \mathcal{M}^f$ weakly as $k\to \infty$ and, thus, by \cite[Proposition~IX.1.17]{Jacod2003}, the limiting process $(\mathcal{M}^{f}_t)_{t\in[0,T]}$ is a local martingale on $(\tilde{\Omega},\tilde{\mathcal{F}},\tilde{\P})$.

  \smallskip

  (v) Since $(\hat{X}^k,\hat{M}^k)\stackrel{\mathscr{D}}{\sim}(X^{n_k},M^{n_k})$ for every $k\in\N$ and pathwise uniqueness holds for SVEs with Lipschitz continuous coefficients (see e.g. \cite[Theorem~1.1]{Wang2008}), the general version of the Yamada--Watanabe result (\cite[Theorem~1.5]{Kurtz2014}) yields that $\hat{X}^k$ can be represented as the stochastic output of the Volterra equation \eqref{eq:SVE} from the stochastic input $\hat{M}^k$ in the same way as $X^{n_k}$ from $M^{n_k}$, hence, we get that
  \begin{equation}\label{eq:sve_skorokhod}
    \hat{X}^k_t
    = x_0(t)+\int_0^tK_\mu(s,t)\mu_{n_k}(s,\hat{X}^k_s)\dd s+\int_0^tK_\sigma(s,t)\dd \hat{M}^k_s,\quad t\in[0,T],\quad \tilde{\P}\text{-a.s.,}
  \end{equation}
  holds. To continue the proof of (v), we need to distinguish between (a) bounded kernels and (b) kernels of convolutional type.

  (a) We start with the bounded kernels as in Assumption~\ref{ass:kernel_sm_or_conv}~(i). Due to the absolute continuity of $K_\sigma$ in the first variable, we can apply the integration by part formula for semimartingales (see \cite[Theorem~(VI).38.3]{Rogers2000}) to rewrite \eqref{eq:sve_skorokhod} to
  \begin{align}\label{eq:takeLimInprob}
    \hat{X}^k_t= x_0(t)+\int_0^tK_\mu(s,t)\mu_{n_k}(s,\hat{X}^k_s)\dd s+K_\sigma(t,t)\hat{M}^k_t
    +\int_0^t\hat{M}^k_s \partial_1 K_\sigma(s,t)\dd s.
  \end{align}
  Since $(\hat{X}^k,\hat{M}^k)\to (X,M)$ in $C([0,T];\R^2)$ as $k\to\infty$, $\tilde{\P}$-a.s., and $K_\sigma$ is bounded, we obtain by Lemma~\ref{lem:convergence_integral} that $\hat{X}^k\to X$ and $K_\sigma \hat{M}^k\to K_\sigma M$ in $C([0,T];\R)$ as $k\to\infty$, $\tilde{\P}$-a.s., and $\int_0^{\cdot} K_\mu(s,\cdot)\mu_{n_k}(s,\hat{X}^k_s)\dd s \to \int_0^t K_\mu(s,\cdot)\dd A_s$ in $C([0,T];\R)$ in probability as $k\to\infty$. Furthermore, applying H{\"o}lder's inequality with $p>4$ (see Assumption~\ref{ass:kernel_sm_or_conv}) and denoting $q=p/(p-1)$, we get by the integrability of $\partial_1K_\sigma$ that
  \begin{align*}
    \bigg\|\int_0^{\cdot}(\hat{M}^k_s-M_s)\partial_1 K_\sigma(s,\cdot)\dd s\bigg\|_{\infty}
    &\leq \bigg(\int_0^T |\hat{M}_s^k-M_s|^q\dd s\bigg)^{\frac{1}{q}}\bigg\| \int_0^{\cdot} |\partial_1K_\sigma(s,\cdot)|^p\dd s \bigg\|^{\frac{1}{p}}_{\infty}\\
    &\leq C\|\hat{M}^k-M\|_{\infty}.
  \end{align*}
  Hence, the $\tilde{\P}$-a.s. convergence $(\hat{M}^k)_{k\in \N}$ to $M$ implies $\int_0^{\cdot}\hat{M}^k_sK_\sigma(s,\cdot)\dd s\to \int_0^{\cdot} M_sK_\sigma(s,\cdot)\dd s$ as $k\to \infty$ $\tilde{\P}$-a.s., and we can take the limit in probability in \eqref{eq:takeLimInprob} or the $\tilde{\P}$-a.s. limit for some subsequence, to obtain that \eqref{eq:X_MP} holds $\tilde{\P}$-a.s.

  (b) For convolution kernels as in Assumption~\ref{ass:kernel_sm_or_conv}~(ii), we integrate both sides of \eqref{eq:sve_skorokhod} and use the stochastic Fubini theorem (see e.g. \cite[Theorem~2.2]{Veraar2012}) twice to obtain
  \begin{align}
    \int_0^t \hat{X}^k_s\dd s
    &= \int_0^t x_0(s)\dd s +\int _{0}^{t} \int _{0}^{s} K_{\mu}(s,u)\,\mathrm{d} \hat{A}_{u}^{k} \,\mathrm{d}s +\int_0^t \int_0^s K_\sigma(s-u)\dd \hat{M}^k_u \dd s\notag\\
    &= \int_0^t x_0(s)\dd s +\int _{0}^{t} \int _{0}^{s} K_{\mu}(s,u)\,\mathrm{d} \hat{A}_{u}^{k} \,\mathrm{d}s  +\int_0^t \int_u^t K_\sigma(s-u)\dd s\dd \hat{M}^k_u \notag\\
    &= \int_0^t x_0(s)\dd s +\int _{0}^{t} \int _{0}^{s} K_{\mu}(s,u)\,\mathrm{d} \hat{A}_{u}^{k} \,\mathrm{d}s +\int_0^t \int_0^{t-u} K_\sigma(s)\dd s\dd \hat{M}^k_u \notag\\
    &= \int_0^t x_0(s)\dd s +\int _{0}^{t} \int _{0}^{s} K_{\mu}(s,u)\,\mathrm{d} \hat{A}_{u}^{k} \,\mathrm{d}s  +\int_0^t K_\sigma(s) \int_0^{t-s} \dd \hat{M}^k_u \dd s\notag\\
    &= \int_0^t x_0(s)\dd s +\int _{0}^{t} \int _{0}^{s} K_{\mu}(s,u)\,\mathrm{d} \hat{A}_{u}^{k} \,\mathrm{d}s +\int_0^t K_\sigma(t-s) \hat{M}^k_s \dd s.\label{eq:limit2}
  \end{align}
  Since
  \begin{align*}
    \bigg\|\int_0^{\cdot}K_\sigma(\cdot-s)(\hat{M}^k_s-M_s)\dd s\bigg\|_{\infty}
	\leq \|\hat{M}^k-M\|_{\infty} \int_0^T |K_\sigma(T-s)|\dd s
    \leq C\|\hat{M}^k-M\|_{\infty}
  \end{align*}
  and $\hat{M}^k \to M$ as $k\to \infty$, $\tilde{\P}$-a.s, we obtain $\int_0^{\cdot} K_\sigma(t-s) \hat{M}_s^k \dd s\to \int_0^{\cdot} K_\sigma(t-s) M_s \dd s$ as $k\to \infty$, $\tilde{\P}$-a.s. The convergence of $\int _{0}^{t} \int _{0}^{s} K_{\mu}(s,u)\,\mathrm{d} \hat{A}_{u}^{k} \,\mathrm{d}s $ follows as in (a). Thus, taking the $\tilde{\P}$-a.s. limit of both sides of \eqref{eq:limit2} and then taking the derivative yields that \eqref{eq:X_MP} holds for $(X,Z)$, $\tilde{\P}$-a.s.
\end{proof}

\bibliography{literature}{}
\bibliographystyle{amsalpha}

\end{document}